\numberwithin{equation}{section}
\newtheorem{theorem}{Theorem}[section]
\newtheorem{lemma}[theorem]{Lemma}
\newtheorem{proposition}[theorem]{Proposition}
\theoremstyle{definition}
\newtheorem{example}[theorem]{Example}
\theoremstyle{remark}
\newtheorem*{remark}{Remark}
\newcommand{\R}{\mathbb{R}}
\newcommand{\N}{\mathbb{N}}
\newcommand{\htis}{\widehat{T}_{r,N}}
\newcommand{\diff}{\widehat{T}_{r,N}(n) - \widehat{T}_{N-r,N}(n)}
\newcommand{\That}{\widehat{T}}
\renewcommand{\Re}{\operatorname{Re}}
\renewcommand{\Im}{\operatorname{Im}}
\begin{document}

\title[The number of parts in residue classes]
{On the number of parts of integer partitions lying in given residue classes}
\author{Olivia Beckwith and Michael H. Mertens}
\address{Department of Mathematics and Computer Science,
Emory University, Atlanta, Georgia 30322}
\email{olivia.dorothea.beckwith@emory.edu}
\email{mmerten@emory.edu}

\begin{abstract}
Improving upon previous work \cite{BeMe} on the subject, we use Wright's Circle Method to derive an asymptotic formula for the number of parts in all partitions of an integer $n$ that are in any given arithmetic progression.
\end{abstract}

\maketitle

\section{Introduction and statement of results}\label{sec:intro}
The study of asymptotics for partitions is by now almost a century old. It originated from seminal work by Hardy and Ramanujan \cite{HR}, who invented and employed the \emph{Circle Method}, a by now vital tool in Analytic Number Theory, to obtain their famous asymptotic formula for the partition numbers $p(n)$ as $n\rightarrow\infty$,
\[p(n)\sim \frac{1}{4n\sqrt{3}}\exp\left(\pi\sqrt{\frac{2n}{3}}\right).\]
Some 20 years later, Rademacher \cite{Rademacher} refined the methods of Hardy and Ramanujan to obtain an exact formula for the partition numbers in terms of an infinite sum of Kloosterman sums weighted by Bessel functions. Both methods rely heavily on the fact that the generating function for the partition numbers is essentially a modular form, namely the reciprocal of the Dedekind eta function,
\[\eta(\tau)=e^{\frac{\pi i\tau}{12}}\prod_{n=1}^\infty \left(1-e^{2\pi in\tau}\right),\quad \Im(\tau)>0.\]

Several authors \cite{DaSa, DaSaSz, DaSaSz2, ErdosLehner}, have also addressed the question how many parts with certain properties, e.g. congruence conditions, there are asymptotically in a given partition of a positive integer $n$. This also the object of the present paper. 

Before proceeding, let us introduce and fix some notation. For a \emph{partition} $\lambda = (\lambda_0, \dots ,\lambda_k)$, i.e. a non-increasing sequence of positive integers, we let 
\begin{equation}
T_{r,N} (\lambda) := | \{ \lambda_j : \lambda_j \equiv r \pmod{N} \} |
\end{equation}
denote the number of parts in $\lambda$ which are congruent to $r$ modulo $N$. We also set 
\[|\lambda|:=\sum_{j=0}^k \lambda_j.\]
For a positive integer $n$ we then define 
\begin{equation}
 \htis (n) := \sum_{|\lambda| = n} T_{r,N} (\lambda),
\end{equation} 
where the summation runs over all partitions of size $n$. The quantity $\htis(n)$ counts the number of parts congruent to $r  \pmod{N}$ in all partitions of $n$. For example, all partitions of $5$ are
\[(5),\ (4,1),\ (3,2),\ (3,1,1),\ (2,2,1),\ (2,1,1,1),\ (1,1,1,1,1),\]
hence $\That_{1,3}(5)=13$ and $\That_{2,3}(5)=5$.

In \cite{BeMe}, the authors have shown the following asymptotic formula for differences of these quantities.
\begin{theorem}\label{thm:th2}
Let $r,N$ be coprime positive integers with $N\geq 3$ and $1\leq r< N$. Then, as $n\rightarrow\infty$, we have
\begin{align*}
&\diff = \frac{1}{2 \sqrt{2}  N} \cot \left( \frac{\pi r}{N} \right) \frac{e^{ \left(\pi \sqrt{ \frac{2}{3} \left( n - \frac{1}{24} \right)} \right)}}{\sqrt{ \left( n - \frac{1}{24} \right)} } \\
  -&\frac{1}{4\sqrt{3}\varphi(N)}\sum_{\psi(-1) = -1} \psi( r ')L(0,\psi)\frac{e^{ \left(\pi \sqrt{ \frac{2}{3} \left( n - \frac{1}{24} \right)} \right)}}{ n - \frac{1}{24}  }+ O\left( n^2 e^{ \left( \frac{\pi}{2} \sqrt{ \frac{2}{3} \left( n - \frac{1}{24} \right)} \right)} \right) ,
\end{align*}
where $\psi$ runs through all odd Dirichlet characters modulo $N$, $L(s,\psi)$ denotes the Dirichlet $L$-series associated to $\psi$, $r'$ denotes the multiplicative inverse of $r$ modulo $N$, and $\varphi$ denotes Euler's totient function.
\end{theorem}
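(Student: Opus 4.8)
The plan is to express $\diff$ as the $n$-th coefficient of an explicit $q$-series and then run Wright's Circle Method on it. First I would record the generating function. Counting, over all partitions of $n$, the total number of parts of a fixed size $m$ is standard and gives $\sum_{n\ge 1}\htis(n)q^n=\frac{1}{(q;q)_\infty}\sum_{m\equiv r\,(N)}\frac{q^m}{1-q^m}$, where $(q;q)_\infty=\prod_{k\ge1}(1-q^k)$. Since $\gcd(r,N)=1$, orthogonality of the Dirichlet characters modulo $N$ expands the indicator of the class $r$, and forming the difference with the class $N-r$ annihilates the even characters because $\bar\psi(r)-\bar\psi(N-r)=\bar\psi(r)\bigl(1-\psi(-1)\bigr)$. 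Writing $\bar\psi(r)=\psi(r')$ this yields
\[ F(q):=\sum_{n\ge1}\left(\diff\right)q^n=\frac{1}{(q;q)_\infty}\cdot\frac{2}{\varphi(N)}\sum_{\psi(-1)=-1}\psi(r')\,L_\psi(q),\qquad L_\psi(q):=\sum_{m\ge1}\psi(m)\frac{q^m}{1-q^m}. \]

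Next I would pin down the behaviour of the two factors as $q=e^{-t}\to1$. From $\eta(-1/\tau)=\sqrt{-i\tau}\,\eta(\tau)$ with $q=e^{2\pi i\tau}$ one gets $\frac{1}{(q;q)_\infty}\sim\sqrt{\tfrac{t}{2\pi}}\,\exp\!\bigl(\tfrac{\pi^2}{6t}-\tfrac{t}{24}\bigr)$, and the $e^{-t/24}$ is exactly what converts $n$ into $n-\tfrac1{24}$ after coefficient extraction. For the twisted Lambert series I would use that an odd $\psi$ is nontrivial, so $L(s,\psi)$ is entire and the Mellin transform $\int_0^\infty L_\psi(e^{-t})t^{s-1}\,dt=\Gamma(s)\zeta(s)L(s,\psi)$ has its rightmost singularities only at $s=1$ (pole of $\zeta$) and $s=0$ (pole of $\Gamma$). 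Shifting the contour past them gives
\[ L_\psi(e^{-t})\sim \frac{L(1,\psi)}{t}-\tfrac12 L(0,\psi)+O(t). \]
Summing against $\frac{2}{\varphi(N)}\psi(r')$ and invoking the classical identity $\frac{2}{\varphi(N)}\sum_{\psi(-1)=-1}\psi(r')L(1,\psi)=\frac{\pi}{N}\cot\!\bigl(\tfrac{\pi r}{N}\bigr)$ (equivalently the digamma reflection $\psi_0(1-x)-\psi_0(x)=\pi\cot\pi x$ applied to the Hurwitz-zeta form of the residue-class sums) turns the $1/t$ term into the cotangent and leaves $-\frac{1}{\varphi(N)}\sum_{\psi(-1)=-1}\psi(r')L(0,\psi)$ as the constant.

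With the product expansion in hand I would write $\diff=\frac{1}{2\pi i}\int F(e^{-t})e^{nt}\,dt$ over a vertical segment through the saddle $t_0=\pi/\sqrt{6(n-\frac1{24})}$, cut it into a major arc around $t_0$ and minor arcs, and insert the two expansions on the major arc. The resulting integrals all have the shape
\[ \frac{1}{2\pi i}\int t^{\beta}\exp\!\Bigl(\tfrac{\pi^2}{6t}+\bigl(n-\tfrac1{24}\bigr)t\Bigr)\,dt=\Bigl(\tfrac{\pi^2}{6(n-\frac1{24})}\Bigr)^{\frac{\beta+1}{2}}I_{-\beta-1}\!\Bigl(\pi\sqrt{\tfrac23(n-\tfrac1{24})}\Bigr), \]
and I need them for $\beta=-\tfrac12$ (the $1/t$ term against $t^{1/2}$, carrying the cotangent) and $\beta=+\tfrac12$ (carrying the $L(0,\psi)$ term). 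Feeding in $I_{-1/2}(z)=\sqrt{2/(\pi z)}\cosh z$ and $I_{-3/2}(z)\sim\tfrac12\sqrt{2/(\pi z)}\,e^{z}$ and collecting constants reproduces the two main terms verbatim, with the stated orders $(n-\frac1{24})^{-1/2}$ and $(n-\frac1{24})^{-1}$.

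The main obstacle, and the origin of the error term, is the uniform control needed off the saddle. I must extend the expansion of $L_\psi$ to complex $t$ near the positive real axis — not merely on the line — taking care with non-primitive $\psi$ in the Mellin step, and then bound $F(e^{-t})$ on the minor arcs. There $\frac{1}{(q;q)_\infty}$ is far smaller, its dominant minor-arc size coming from the secondary singularity at $q=-1$, where the exponent $\tfrac{\pi^2}{6t}$ is effectively quartered and hence $\sqrt{\cdot}$ halved, producing $e^{\frac{\pi}{2}\sqrt{\frac23(n-\frac1{24})}}$; combined with a polynomial bound on $L_\psi$ this yields the claimed $O\!\bigl(n^2 e^{\frac{\pi}{2}\sqrt{\frac23(n-\frac1{24})}}\bigr)$. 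I would also verify that the discarded $O(t^{3/2})$ and higher Mellin terms, once Bessel-integrated, land inside this error. I expect the minor-arc estimate to be the hard part; the rest is careful bookkeeping of constants.
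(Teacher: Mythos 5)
Your outline is essentially correct, but it is worth saying up front that this paper does not reprove Theorem \ref{thm:th2} at all: it is quoted from \cite{BeMe}, and the only thing said here about its proof is that it ``relies on the modularity for the generating function of the quantity $\diff$, which turns out to be given by the quotient of an explicit weight $1$ Eisenstein series for the principal congruence subgroup $\Gamma(N)$ by the Dedekind eta function.'' So the route on record is: identify the generating function as a genuinely modular object and run a classical Hardy--Ramanujan/Farey-dissection circle method using its exact transformation behaviour. You instead decompose the difference by orthogonality of Dirichlet characters into odd-character Lambert series $L_\psi(q)$, obtain their expansion at $q\to 1$ by Mellin transform and contour shifting (picking up $L(1,\psi)/t$ and $-\tfrac12L(0,\psi)$, with the cotangent emerging from the digamma reflection formula), and then do a saddle-point/Bessel extraction. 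Your constants check out: the Bessel identity you quote is correct, and the $\beta=-\tfrac12$ and $\beta=+\tfrac12$ terms do reproduce $\frac{1}{2\sqrt2 N}\cot(\tfrac{\pi r}{N})\tilde n^{-1/2}e^{\pi\sqrt{2\tilde n/3}}$ and $-\frac{1}{4\sqrt3\varphi(N)}\sum\psi(r')L(0,\psi)\,\tilde n^{-1}e^{\pi\sqrt{2\tilde n/3}}$ with $\tilde n=n-\tfrac1{24}$. What the modular approach buys is precisely the piece you defer: the stated error term has the exponent \emph{halved}, which forces you to track the behaviour of $1/(q;q)_\infty$ at the secondary cusp $q=-1$ and hence to do a genuine Farey dissection (or invoke the eta transformation at all roots of unity), not merely the crude cone estimates of Wright's hypotheses (3)--(4) as formulated in Section \ref{secWright} of this paper, which would only give an error $e^{(2c-K)\sqrt n}$ for some unspecified $K>0$. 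What your approach buys is uniformity with the method used for Theorem \ref{thm:main} in this paper and independence from the Eisenstein-series machinery. You correctly flag the minor-arc bound as the hard part; since you assert rather than execute it, that is the one place where your sketch is materially weaker than a complete proof, but it is a standard (if laborious) estimate and not a conceptual gap.
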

\begin{remark}
The statement of Theorem \ref{thm:th2} is slightly different in shape but equivalent to that of Theorem 1.1 in \cite{BeMe}. This representation makes it clear that for $n$ sufficiently large we have the inequality
\[\That_{r,N}(n)\geq \That_{N-r,N}(n),\]
whenever $1\leq r\leq \tfrac N2$ and $\gcd(N,r)=1$.
\end{remark}

The proof of Theorem \ref{thm:th2} relies again of the modularity for the generating function of the quantity $\diff$, which turns out to be given by the quotient of an explicit weight $1$ Eisenstein series for the principal congruence subgroup $\Gamma(N)$ by the Dedekind eta function. The generating function of the individual terms $\That_{r,N}(n)$ however has absolutely no modularity properties so that an analogous approach wouldn't work. But using a different version of the Circle Method originally due to E. M. Wright \cite{Wright}\footnote{This method has been rediscovered and used in various situations mainly by K. Bringmann and K. Mahlburg , see e.g. \cite{BM1,BM2}, and their collaborators} in a convenient formulation given in \cite{RhoadesNgo}, as well as the famous Euler-Maclaurin summation formula we can prove the following result.

\begin{theorem}\label{thm:main}
For fixed numbers $r,N\in\N$, $r\leq N$, we have the asymptotic
\begin{align*}
\That_{r,N} (n) = e^{\pi \sqrt{ \frac{2n}{3}}} n^{- \frac{1}{2}} \frac{1}{4 \pi N \sqrt{2}} \left[  \log n -  \log \left( \frac{\pi^2}{6} \right)\right. -2  \left(\psi\right.&\left.\left(\frac rN\right)+ \log N\right)\\
&\left. + O\left(n^{- \frac{1}{2}} \log n\right)\right],
\end{align*}
where $\psi(z)=\frac{\Gamma'(z)}{\Gamma(z)}$ denotes Euler's digamma function, as $n\rightarrow\infty$.
\end{theorem}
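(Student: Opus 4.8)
The plan is to exploit the fact that, although $\htis$ has no modular generating function, its generating function factors as a reciprocal eta times a restricted Lambert series, both of whose radial behaviour at $q=1$ can be computed by hand; this is exactly the input required by Wright's circle method in the form of \cite{RhoadesNgo}. First I would record the elementary identity
\[
\sum_{n\geq 0}\htis(n)\,q^n=\frac{1}{(q;q)_\infty}\sum_{\substack{m\geq 1\\ m\equiv r\,(N)}}\frac{q^m}{1-q^m},\qquad (q;q)_\infty:=\prod_{k\geq 1}(1-q^k),
\]
which follows either by differentiating the two-variable product $\prod_{m\equiv r}(1-zq^m)^{-1}\prod_{m\not\equiv r}(1-q^m)^{-1}$ at $z=1$, or directly by summing the multiplicity of each admissible part over all partitions of $n$. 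The first factor is the reciprocal Dedekind eta function up to a power of $q$; the second is the piece carrying the arithmetic of the residue class.

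\textbf{Asymptotics as $q=e^{-t}\to 1$.} Setting $q=e^{-t}$, $t\to 0^+$, I would treat the two factors separately. For the eta factor the modular inversion formula gives the (essentially exact) statement $(q;q)_\infty^{-1}=\sqrt{t/2\pi}\,\exp(\pi^2/(6t)-t/24)\bigl(1+O(e^{-4\pi^2/t})\bigr)$. For the Lambert factor, note $\frac{q^m}{1-q^m}=(e^{mt}-1)^{-1}$, so with $L(t):=\sum_{m\equiv r\,(N)}(e^{mt}-1)^{-1}$ I would use the Mellin transform $\int_0^\infty x^{s-1}(e^x-1)^{-1}dx=\Gamma(s)\zeta(s)$ together with $\sum_{m\equiv r\,(N)}m^{-s}=N^{-s}\zeta(s,\tfrac rN)$ to write
\[
L(t)=\frac{1}{2\pi i}\int_{(c)}\Gamma(s)\zeta(s)\,\zeta(s,\tfrac rN)\,N^{-s}t^{-s}\,ds,\qquad c>1.
\]
Shifting the contour leftward (legitimate since $\Gamma$ decays rapidly in vertical strips), the decisive feature is a \emph{double} pole at $s=1$, produced by the simple poles of $\zeta(s)$ and of the Hurwitz zeta $\zeta(s,\tfrac rN)$. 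Using $\zeta(s,a)=\tfrac1{s-1}-\psi(a)+O(s-1)$, the residue evaluates to
\[
L(t)=\frac{1}{Nt}\Bigl(\log\tfrac1t-\psi(\tfrac rN)-\log N\Bigr)+O(1),
\]
so the digamma value emerges precisely as the finite part of the Hurwitz zeta at its pole. (The same expansion can be produced by the Euler--Maclaurin formula applied to $x\mapsto(e^{(r+Nx)t}-1)^{-1}$, the route announced in the introduction, the $\log$ then coming from the simple pole of $(e^x-1)^{-1}$ at the origin.) Multiplying the two factors yields
\[
\sum_{n}\htis(n)e^{-nt}=\frac{1}{N\sqrt{2\pi}}\,t^{-1/2}e^{\pi^2/(6t)}\Bigl(\log\tfrac1t-\psi(\tfrac rN)-\log N\Bigr)\bigl(1+o(1)\bigr).
\]

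\textbf{Circle method and saddle point.} With this main-arc asymptotic I would invoke the Ngo--Rhoades formulation of Wright's method, recovering the coefficients from a contour integral over a circle $|q|=e^{-t_0}$. The dominant contribution comes from a neighbourhood of the real saddle $t_0=\pi/\sqrt{6n}$ of $\Phi(t)=\pi^2/(6t)+nt$, for which $\Phi(t_0)=\pi\sqrt{2n/3}$ and $\Phi''(t_0)=(6n)^{3/2}/(3\pi)$. A standard Laplace evaluation gives
\[
\htis(n)\sim\frac{g(t_0)e^{\Phi(t_0)}}{\sqrt{2\pi\,\Phi''(t_0)}},\qquad g(t)=\frac{1}{N\sqrt{2\pi}}\,t^{-1/2}\Bigl(\log\tfrac1t-\psi(\tfrac rN)-\log N\Bigr),
\]
which after simplifying the constants (using $t_0^{-1/2}=(6n)^{1/4}\pi^{-1/2}$ and $\sqrt 3/\sqrt 6=1/\sqrt2$) collapses to
\[
\htis(n)=\frac{1}{2\pi N\sqrt2}\,n^{-1/2}\Bigl(\log\tfrac1{t_0}-\psi(\tfrac rN)-\log N\Bigr)e^{\pi\sqrt{2n/3}}\bigl(1+o(1)\bigr).
\]
Inserting $\log\tfrac1{t_0}=\tfrac12\log n-\tfrac12\log(\pi^2/6)$ and factoring out $\tfrac12$ reproduces exactly the stated prefactor $\tfrac{1}{4\pi N\sqrt2}n^{-1/2}$ and bracket $\log n-\log(\pi^2/6)-2\psi(r/N)-2\log N$.

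\textbf{Main obstacle.} The genuinely delicate points are twofold. First, the rigorous passage from the radial asymptotic to the coefficient asymptotic requires bounding the generating function on the minor arc $t=t_0+iy$, $|y|\gg t_0$, to show it is negligible against the main-arc contribution; this is where the non-modularity bites, since one cannot transform to another cusp and must instead estimate the Lambert series and the eta factor crudely but uniformly in the angular variable. Second, pinning down the error term $O(n^{-1/2}\log n)$ demands carrying the expansions one order further---tracking the $O(1)$ term of $L(t)$, the $e^{-t/24}$ correction, and the subleading Laplace correction, and checking how each scales at $t_0\sim n^{-1/2}$ against the $\log n$ in the leading amplitude. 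The bookkeeping there is routine but is precisely where the claimed error is established.
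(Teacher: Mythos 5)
Your argument is correct and follows the same overall architecture as the paper: the generating-function identity, a main-arc expansion of the restricted Lambert series of the form $\frac{1}{Nt}\bigl(\log\frac1t-\psi(\frac rN)-\log N\bigr)+O(\log\frac1t)$, and then Wright's circle method in the Ngo--Rhoades formulation; your constant bookkeeping at the saddle $t_0=\pi/\sqrt{6n}$ reproduces the stated prefactor and bracket exactly. The one genuine difference is how you obtain the key expansion. The paper proves it (Lemma \ref{thm:lem1}) by Euler--Maclaurin summation in Zagier's form: it splits $f(t)=\frac{1}{e^t-1}$ as $t^{-1}e^{-t}+f^*(t)$, applies Proposition \ref{propZagier} to the smooth part $f^*$, and extracts the $\log$ and the digamma from the series $\sum_{m\ge1}\frac{e^{-mt}}{m}$ and $\sum_{m\ge1}\frac{1}{m(m+r/N)}$ together with $\int_0^\infty f^*=\gamma_E$. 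You instead take a Mellin transform and read off the residue of $\Gamma(s)\zeta(s)\zeta(s,\tfrac rN)N^{-s}t^{-s}$ at the double pole $s=1$, the digamma arising as the constant term of the Hurwitz zeta at its pole. Both are standard and yield identical expansions; the Mellin route is arguably cleaner for identifying the constant and for pushing the expansion to higher order (further poles give further terms), but you should note explicitly that the contour shift remains valid for complex $t$ in the cone $|\arg t|<\tfrac\pi2-\delta$ (the factor $|t^{-s}|\ll|t|^{-\sigma}e^{(\pi/2-\delta)|\Im s|}$ is still dominated by the decay of $\Gamma(s)$), since Wright's method needs the major-arc asymptotic on a whole arc, not just radially. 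Your two flagged ``delicate points'' are handled in the paper by the trivial bound $\bigl|\sum_{m\equiv r}\frac{q^m}{1-q^m}\bigr|\le\sum_{m\ge1}\frac{|q|^m}{1-|q|^m}\ll_\delta|t|^{-3/2}$ on the minor arc together with the classical minor-arc bound for $1/\eta$, and by splitting $L$ into its logarithmic-type and polynomial-type components and citing Propositions \ref{Wrightlog} and \ref{Wrightpoly} (which package the Laplace evaluation and the $O(n^{-1/2}\log n)$ error once the hypotheses are checked), so neither is an obstruction to completing your write-up.
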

\begin{remark}
\begin{enumerate}
\item Plugging in $r=N$ in Theorem \ref{thm:main} and using the well-known fact that $\psi(1)$ equals the negative of the Euler-Mascheroni constant $\gamma_E=0.577215664...$ (see e.g. equation 5.4.12 in \cite{NIST}) recovers Theorem 1.3 in \cite{BeMe} which was proven using results about period functions of Maa{\ss}-Eisenstein series.
\item Since the digamma function $\psi(x)$ is monotonically increasing and negative for real arguments $x\leq 1$, we see at once that for $1\leq r< s\leq N$ we have the inequality
\[\That_{r,N}(n)\geq \That_{s,N}(n)\]
for all sufficiently large $N$.
\end{enumerate}
\end{remark}
\begin{example}
Let $N=3$. We want to illustrate the asymptotic formula from Theorem \ref{thm:main} for $\That_{r,3}(n)$ for $r=1,2,3$. Let $Q_{r}(n)$ denote the quotient of $\That_{r,3}(n)$ by the respective main term. Table \ref{numerics1} gives the numerical values (decimal expansions are truncated, not rounded).
\begin{table}[h!]
\begin{tabular}{|c|c|c|c|c|c|}
\hline 
$n$ & $10$ & $100$ & $1,000$ & $10,000$ & $100,000$  \\ 
\hline
$Q_1(n)$ & 0.982155 & 0.992241 & 0.997608 & 0.999273 & 0.999778  \\ 
\hline 
$Q_2(n)$ & 1.149645 & 1.017114 & 1.003063 & 1.000592 & 1.000115  \\ 
\hline 
$Q_3(n)$ & 1.792248 & 1.067095 & 1.011771 & 1.002470 & 1.000563  \\ 
\hline 
\end{tabular}
\vspace{0.5cm}
\caption{Numerics for Theorem \ref{thm:main}}
\label{numerics1}
\end{table}
\end{example}

The rest of this paper is organized as follows. In Section \ref{secPrelim} we collect some preliminary results on Euler-Maclaurin summation and Wright's Circle Method, and in Section \ref{secProof}, we prove Theorem \ref{thm:main}.

\section*{Acknowledgements}
The auhors would like to thank Kathrin Bringmann for drawing the reference \cite{Zagier} to their attention and Ken Ono for helpful discussions.

\section{Preliminaries}\label{secPrelim}
\subsection{The Euler-Maclaurin formula and asymptotic expansions}
In this subsection we recall a maybe not too widely known version of the Euler-Maclaurin summation formula. Throughout the paper we use the common notation 
$$h(t) \sim \sum_{k = -1}^{\infty} a_k t^k,\quad (t \to 0)$$ 
for asymptotic expansions in its strong sense, meaning that for every $M \in\N_0$ we have 
$$h(t) - \sum_{k=-1}^{M-1} a_k t^k=O(t^M),\quad (t\to 0).$$ 

In the following, we will often encounter \emph{Bernoulli polynomials} $B_n(x)$ for $n$ a non-negative integer, which can be defined via their generating function
\begin{equation*}
\sum_{n=0}^\infty B_n(x)\frac{t^n}{n!}:=\frac{te^{xt}}{e^t-1},\quad |t|<2\pi,
\end{equation*}
and the \emph{Bernoulli numbers} $B_n:=B_n(0)$. 

The Bernoulli polynomials are also given explicitly in terms of the Bernoulli numbers as follows:
\begin{equation}\label{equation:bernoulli}
B_n(x) = \sum_{k=0}^n {{n}\choose{k}} B_{n-k} x^k
\end{equation}
and they satisfy the following relations,
\begin{equation}\label{eqBernoulliSum}
B_n(x+1)-B_n(x)=nx^{n-1}
\end{equation}
and 
\begin{equation}\label{eqBernoulliSym}
B_n(1-x)=(-1)^nB_n(x),
\end{equation}
see e.g. equations 24.4.1 and 24.4.3 in \cite{NIST}.

In Proposition 3 in \cite{Zagier}, Zagier gives the following formula, which we use (see also \cite[Proposition A.1]{BM1}). He proves a slightly more restrictive version of the following theorem, but states the version displayed here\footnote{The equation he gives (see eq. (44) in \cite{Zagier}), however, contains a slight typo, the + sign in front of the sum should be a - sign, as also pointed out in \cite{BM1}.}. The reader is also referred to formula 23.1.32 in \cite{AbrSteg} as well as \cite{Lampret} and the references therein.
\begin{proposition}\label{propZagier}
Let $f$ be a $C^{\infty}$ function on the positive real line which has an asymptotic expansion $f(t) \sim \sum_{n=0}^{\infty} b_n t^n$ as $t \to 0$, and satisfies the property that it and all of its derivatives are of rapid decay at infinity. Then we have the asymptotic expansion  
$$
\sum_{m = 0}^{\infty} f( (m + a) t) \sim \frac{ 1}{t}\int_0^{\infty} f(t) dt - \sum_{n=0}^{\infty} b_n \frac{B_{n+1} (a) }{n+1} t^n, \quad (t \to 0).
$$
for every $a>0$.
\end{proposition}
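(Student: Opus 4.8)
The plan is to pass to the Mellin transform and read off the asymptotic expansion from the residues of a shifted Mellin inversion integral. Set $\tilde f(s) := \int_0^\infty f(x)\,x^{s-1}\,dx$. Since $f(x)\to b_0$ as $x\to 0$ and $f$ decays rapidly at infinity, this integral converges and is holomorphic on $\Re(s)>0$. To continue it, I would split $\int_0^\infty = \int_0^1 + \int_1^\infty$: the tail is entire by rapid decay, while on $[0,1]$ I subtract the expansion, writing $f(x)=\sum_{n=0}^{M-1}b_n x^n + R_M(x)$ with $R_M(x)=O(x^M)$. The remainder integral is holomorphic for $\Re(s)>-M$, and $\int_0^1 b_n x^{n+s-1}\,dx = b_n/(s+n)$. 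Hence $\tilde f$ extends meromorphically to $\C$ with only simple poles at $s=-n$, $n\ge 0$, and $\operatorname{Res}_{s=-n}\tilde f(s)=b_n$.

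Next I would invoke Mellin inversion together with the Hurwitz zeta function. For $c>1$, inversion gives $f(x)=\frac{1}{2\pi i}\int_{(c)}\tilde f(s)\,x^{-s}\,ds$, so summing over the progression and interchanging sum and integral yields
\[
\sum_{m=0}^\infty f((m+a)t) = \frac{1}{2\pi i}\int_{(c)} \tilde f(s)\,\zeta(s,a)\,t^{-s}\,ds, \qquad \zeta(s,a)=\sum_{m=0}^\infty (m+a)^{-s}.
\]
The interchange is justified because the smoothness and rapid decay of $f$ force $\tilde f(s)$ to decay faster than any polynomial on vertical lines (integrate by parts repeatedly), which dominates the polynomial growth of $\zeta(s,a)$.

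Now I would shift the contour leftward to $\Re(s)=-M-\tfrac12$ and collect residues from two sources. The simple pole of $\zeta(s,a)$ at $s=1$, with residue $1$, produces $\tilde f(1)\,t^{-1}=\frac1t\int_0^\infty f(x)\,dx$, the main term. The poles of $\tilde f$ at $s=-n$ for $0\le n\le M$ produce, using $\operatorname{Res}_{s=-n}\tilde f=b_n$ and the classical evaluation $\zeta(-n,a)=-\frac{B_{n+1}(a)}{n+1}$, the terms $-b_n\frac{B_{n+1}(a)}{n+1}t^n$. On the shifted line $t^{-s}=t^{M+1/2}$ and $\tilde f(s)\zeta(s,a)$ is absolutely integrable, so the remaining integral is $O(t^{M+1/2})=o(t^M)$; letting $M$ vary gives the expansion in the strong sense.

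The main obstacle is analytic bookkeeping rather than a single hard idea: establishing the meromorphic continuation and exact pole structure of $\tilde f$, proving the vertical-line decay that both legitimizes the interchange and controls the shifted integral, and invoking $\zeta(-n,a)=-\frac{B_{n+1}(a)}{n+1}$, which is precisely where the Bernoulli polynomials $B_{n+1}(a)$ enter. A more elementary alternative would apply the classical Euler-Maclaurin formula directly to $\phi(m)=f((m+a)t)$, avoiding Mellin transforms, but matching the boundary contributions against the $B_{n+1}(a)$ is more delicate, so I would favor the transform route.
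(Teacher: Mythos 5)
Your Mellin-transform argument is correct for real $t\to 0^+$, but it is a genuinely different route from the one taken here: the paper proves Proposition \ref{propZagier} by the ``elementary alternative'' you set aside, namely applying the classical Euler--Maclaurin summation formula (the first formula on p.~13 of \cite{Zagier}) to $g(x)=f((a+x)t)$, expanding $g^{(n)}(0)=t^nf^{(n)}(at)$ via the asymptotic series of $f$, and reorganizing the resulting double sum; the boundary-matching you were worried about is handled cleanly by the binomial identity \eqref{equation:bernoulli} together with \eqref{eqBernoulliSum} and \eqref{eqBernoulliSym}, which reassemble the coefficients into $-B_{n+1}(a)/(n+1)$. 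Your route buys conceptual transparency --- the Bernoulli polynomials enter as the special values $\zeta(-n,a)=-B_{n+1}(a)/(n+1)$ of the Hurwitz zeta function, and all the steps (meromorphic continuation of $\tilde f$ with residues $b_n$, superpolynomial decay on vertical lines by repeated integration by parts, contour shift past $s=1$ and $s=-n$) are sound. What the paper's route buys is robustness under rotation of $t$ into the complex cone $|\arg(t)|<\tfrac\pi2-\delta$, which is exactly what the remark after the proposition records and what Lemma \ref{thm:lem1} and the application of Wright's method in Section \ref{secProof} require: on a shifted vertical line one has $|t^{-s}|=|t|^{-\Re(s)}e^{\Im(s)\arg(t)}$, so for non-real $t$ the integrand grows like $e^{(\pi/2-\delta)|\Im(s)|}$, and the merely superpolynomial (not exponential) decay of $\tilde f$ no longer controls the shifted integral without additional hypotheses on $f$. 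So your proof establishes the proposition as literally stated, but you would need a separate argument (or simply the Euler--Maclaurin version) to get the cone-valid form actually used later in the paper.
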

\begin{remark}
An inspection of the proof of Proposition \ref{propZagier} shows that the given asymptotic expansion is actually valid whenever $t$ is a complex variable with $|\arg(t)|< \frac{\pi}{2}-\delta$ for some $\delta>0$ provided that $f^{(n)}(e^{i\theta}x)$ is of rapid decay for real $x\to\infty$, $|\theta|<\frac{\pi}{2}-\delta$ and all non-negative integers $n$.
\end{remark}
For the convenience of the reader, we give a proof of Proposition \ref{propZagier}.
\begin{proof}
For some $t$, Let $g(x) := f((a + x) t)$. Note that $g$ is still smooth and has derivatives of rapid decay at infinity. Applying the first formula on page 13 of \cite{Zagier} to $g(x)$ gives us the following:
\begin{align*}
\sum_{m=1}^{\infty} f((m+a)t)  = \frac{ \int_{at}^{\infty} f(x) dx}{t} + &\sum_{n=1}^{N-1} \frac{(-1)^n B_{n+1} }{(n+1)!}  
g^{(n)}(0)\\
& \qquad \qquad - (-1)^N \int_0^{\infty} g^{(N)} (x) \frac{\widehat{B}_N (x)}{N!} dx.
\end{align*}

where $\widehat{B}_N(x) := B_n(x - \lfloor x \rfloor)$. 

We notice that the first term is given by 
$$   \frac{ \int_{at}^{\infty} f(x) dx}{t} = \frac{ \int_0^{\infty} f(x) dx}{t} - \frac{ \int_0^{at} f(x) dx}{t}$$

Using the asymptotic expansion for $f$, we have 
\begin{equation*}
\frac{ \int_0^{at} f(x) dx}{t} = \sum_{n=0}^{N-1} \frac{b_n a^{n+1} t^n }{n+1} + O(t^N), \quad ( t \to 0).
\end{equation*}

We notice that the last integral is $O(t^N)$ as $t \to 0$, since
\begin{equation*}
 - (-1)^N \int_0^{\infty} g^{(N)} (x) \frac{\widehat{B}_N (x)}{N!} dx = (- t)^{N-1} \int_0^{\infty} f^{(N)} (x + a) \frac{\widehat{B}_N(\frac{x}{t})}{N!} dx, 
\end{equation*}
and since $\widehat{B}_N (x)$ is bounded and $f^{(N)}$ is of rapid decay. 

Now we consider the second sum. We have $g^{(n) }(0) = t^n f^{(n)} (at)$, which has the following expansion:
$$
g^{(n) }(0) = t^n \left( \sum_{m=0}^{N-1 - n} b_{m+n} \frac{(m+n)!}{m!} a^m t^m + O(t^{N-n}) \right), \quad (t \to 0).
$$ 

Substituting this formula and switching the order of summation, we find the asymptotic expansion as $t \to 0$ for the middle sum:
\begin{equation*}
\begin{aligned}
&\sum_{n=1}^{N-1} \frac{(-1)^n B_{n+1} }{(n+1)!} g^{(n)}(0)  \\
=& \sum_{n=1}^{N-1} \frac{(-1)^n B_{n+1} }{(n+1)!}  t^n \left( \sum_{m=0}^{N-1 - n} b_{m+n} \frac{(m+n)!}{m!} a^m t^m + O(t^{N-n}) \right), \quad (t \to 0) \\
=& \sum_{k=0}^{N-1} \frac{b_k t^k}{k+1} \sum_{n=0}^k (-1)^{n} B_{n+1} a^{k-n} {{k+1}\choose{n+1}} + O(t^N), \quad (t \to 0).
\end{aligned}
\end{equation*}

Now we can put everything together to obtain (using the shorthand $I_f:=\int_0^\infty f(x)dx$) that
\begin{align*}
&\sum_{m=0}^\infty f((m+a)t)=f(at)+\sum_{m=1}^\infty f((m+a)t)\\ 
=& \sum_{n=0}^{N-1} b_na^nt^n +\frac{I_f}{t}-\sum_{n=0}^{N-1} \frac{b_na^{n+1}}{n+1}t^n +\sum_{n=0}^{N-1} \frac{b_nt^n}{n+1}\\
&\qquad\qquad\qquad\qquad\qquad\qquad\times \left[\sum_{k=0}^n (-1)^k B_{k+1}a^{n-k} {{n+1} \choose {k+1}}\right]+O(t^N)\\
						   =& \frac{I_f}{t} + \sum_{n=0}^{N-1} b_na^nt^n + \sum_{n=0}^{N-1} \frac{b_n}{n+1} \left[\sum_{k=0}^{n+1} {{n+1} \choose k} B_k(-a)^{n+1-k} \right](-t)^n+O(t^N).
\end{align*}
By \eqref{equation:bernoulli}, we recognize the sum in square brackets as the Bernoulli polynomial $B_{n+1}(-a)$. Then using \eqref{eqBernoulliSum} and \eqref{eqBernoulliSym}, one easily sees that the coefficient of $t^n$ ($n\geq 0$) in the above expansion is given by $-\frac{B_{n+1}(a)}{n+1}$, which is what we claimed.
\end{proof}
\subsection{Wright's Circle Method}\label{secWright}
In this section, we briefly recall two propositions from \cite{RhoadesNgo}, based on Wright's version of the Circle Method \cite{BM1, Wright}, that allow to obtain asymptotic results for products of functions in a fairly general setting. 

Suppose $\xi(q)$ and $L(q)$ are analytic functions for complex arguments $|q|<1$ and $q\notin\R_{\leq 0}$, such that 
$$\xi(q)L(q)=:\sum_{n=0}^\infty a(n)q^n$$
is analytic for $|q|<1$. Further assume the following hypotheses, where $0<\delta<\tfrac\pi 2$ and $c>0$ are fixed constants.
\begin{enumerate}
\item\label{hypo1} As $t\rightarrow 0$ in the cone $|\arg (t)|<\tfrac \pi 2-\delta$ and $|\Im (t)|\leq\pi$ we have, for some $B\in\R$, either
\begin{equation}\label{LpolyMajor}
L(e^{-t})=t^{-B}\left(\sum\limits_{\ell=0}^{k-1}\alpha_\ell t^\ell+O_\delta(t^k)\right),
\end{equation}
in which case we say that $L$ is \emph{of polynomial type near $1$},
or
\begin{equation}\label{LlogMajor}
L(e^{-t})=\frac{\log t}{t^B}\left(\sum\limits_{\ell=0}^{k-1}\alpha_\ell t^\ell+O_\delta(t^k)\right),
\end{equation}
in which case we call $L$ \emph{of logarithmic type near $1$}. 
\item\label{hypo2} As $t\rightarrow 0$ in the cone $|\arg (t)|<\tfrac \pi 2-\delta$ and $|\Im (t)|\leq \pi$ we have
\begin{equation}\label{xiMajor}
\xi(e^{-t})=t^\beta e^{\frac{c^2}{t}}\left(1+O_\delta(e^{-\frac \gamma t})\right)
\end{equation}
for real constants $\beta\geq 0$ and $\gamma>c^2$.
\item\label{hypo3} As $t\rightarrow 0$ in the cone $\tfrac \pi 2-\delta\leq |\arg (t)|<\tfrac \pi 2$ and $|\Im (t)|\leq \pi$ one has
\begin{equation}\label{Lminor}
|L(e^{-t})|\ll_\delta |t|^{-C},
\end{equation}
where $C=C(\delta)>0$.
\item\label{hypo4} As $t\rightarrow 0$ in the cone $\tfrac \pi 2-\delta\leq |\arg (t)|<\tfrac \pi 2$ and $|\Im (t)|\leq \pi$ one has
\begin{equation}\label{ximinor}
|\xi(e^{-t})|\ll_\delta\xi(|e^{-t}|)e^{-K\Re\left(\frac 1{t}\right)},
\end{equation}
where $K=K(\delta)>0$.
\end{enumerate} 

These hypotheses in \eqref{LpolyMajor}--\eqref{xiMajor} ensure the asymptotics of $L$ and $\xi$ on the so-called \emph{major arc}, those in \eqref{Lminor}--\eqref{ximinor} their asymptotics on the so-called \emph{minor arc} of the unit circle. 

For our purposes, we require the following two propositions (see Propositions 1.8 and 1.10 in \cite{RhoadesNgo}).
\begin{proposition}\label{Wrightpoly}
Suppose the hypotheses (1)--(4) are satisfied and that $L$ has polynomial type near $1$. Then there is an asymptotic expansion
\begin{equation}\label{asympoly}
a(n)=e^{2c\sqrt{n}}n^{\frac 14(2B-2\beta-3)}\left(\sum\limits_{r=0}^{M-1}p_rn^{-\frac r2}+O(n^{-\frac M2})\right),
\end{equation}
where
\begin{equation}
p_r=\sum\limits_{s=0}^r \alpha_sw_{s,r-s}
\end{equation}
with $\alpha_s$ as in \eqref{LpolyMajor} and 
\begin{equation}
w_{s,r}=\frac{c^{s+\beta-B+\frac 12}}{(-4c)^r2\pi^\frac 12}\cdot\frac{\Gamma\left(s+\beta-B+r+\frac 32\right)}{r!\Gamma\left(s+\beta-B-r+\frac 32\right)}
\end{equation}
for the coefficients $a(n)$ of $\xi(q)L(q)$ as $n\rightarrow\infty$.
\end{proposition}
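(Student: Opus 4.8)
The plan is to recover the coefficients $a(n)$ from the generating function $\xi(q)L(q)$ by Cauchy's integral formula and to evaluate the resulting contour integral by a saddle point analysis in the spirit of Wright. Writing $q=e^{-t}$, Cauchy's theorem gives
\[
a(n)=\frac{1}{2\pi i}\int_{\mathcal C}\xi(e^{-t})L(e^{-t})\,e^{nt}\,dt,
\]
where $\mathcal C$ is the vertical segment $t=t_0+iy$, $y\in[-\pi,\pi]$, coming from a circle $|q|=e^{-t_0}$. The exponent $\frac{c^2}{t}+nt$ of the dominant factor $\xi(e^{-t})e^{nt}$ has a saddle at $t=t_0:=c/\sqrt n$, where it takes the value $2c\sqrt n$; this already explains the leading factor $e^{2c\sqrt n}$ and fixes the radius of integration.

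First I would split $\mathcal C$ into a major arc, on which $|\arg t|<\frac\pi2-\delta$ and the expansions \eqref{LpolyMajor} and \eqref{xiMajor} apply, and the complementary minor arc. On the minor arc, hypotheses \eqref{Lminor} and \eqref{ximinor} bound the integrand by a constant times $\xi(|e^{-t}|)\,|t|^{-C}e^{-K\Re(1/t)}$; since $\Re(1/t)$ is bounded below throughout this range while the main contribution is of size $e^{2c\sqrt n}$, one checks that the minor arc contributes an error exponentially smaller than the claimed main term, which may therefore be absorbed into the $O$-term.

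On the major arc I would substitute the truncated expansions, so that the integrand becomes $\sum_{\ell=0}^{k-1}\alpha_\ell\,t^{\beta-B+\ell}e^{c^2/t+nt}$ up to the error controlled by \eqref{LpolyMajor} and \eqref{xiMajor}, and then complete each piece to a full Hankel-type contour at the cost of an exponentially small error. Each resulting model integral
\[
\frac{1}{2\pi i}\int t^{\beta-B+\ell}e^{c^2/t+nt}\,dt
\]
is evaluated by the rescaling $t=(c/\sqrt n)\,u$, which turns the exponent into $c\sqrt n\,(u+1/u)$ and identifies the integral, via Schl\"afli's contour representation, with a modified Bessel function $I_\nu$ of argument $2c\sqrt n$ and order $\nu$ depending on $\beta-B+\ell$. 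Inserting the classical asymptotic expansion of $I_\nu$, whose prefactor $e^z/\sqrt{2\pi z}$ supplies the remaining factor $n^{-1/4}$ (so that the overall exponent is indeed $\tfrac14(2B-2\beta-3)$) and whose coefficients are precisely the Gamma-function ratios $\Gamma(\nu+r+\frac12)/\bigl(r!\,2^r\,\Gamma(\nu-r+\frac12)\bigr)$, produces the powers $n^{-r/2}$ together with the coefficients $w_{s,r}$; summing over $\ell$, i.e.\ over $s$, then yields $p_r$.

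The main obstacle is the uniform control of the three error sources so that they assemble into a genuine asymptotic expansion to order $M$: the error from replacing $\xi$ and $L$ by their finite Taylor data, the error from completing the major arc to the Hankel contour, and the truncation error of the Bessel asymptotic series. Each of these must be shown to be $O(n^{-M/2})$ relative to $e^{2c\sqrt n}n^{\frac14(2B-2\beta-3)}$ after accounting for the saddle-point width, and the bookkeeping needed to match the explicit constant $\frac{c^{s+\beta-B+1/2}}{(-4c)^r\,2\pi^{1/2}}$ appearing in $w_{s,r}$ is where the computation is most delicate.
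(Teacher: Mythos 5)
The paper does not actually prove Proposition \ref{Wrightpoly}: it is imported from \cite{RhoadesNgo} (Proposition 1.8 there) and attributed to Wright \cite{Wright}, so there is no in-paper proof to compare against. Your outline follows exactly the route of those sources: Cauchy's formula on the circle $\Re(t)=t_0=c/\sqrt n$, a major/minor arc split at $|\arg t|=\tfrac\pi2-\delta$, and on the major arc the reduction of each term $\alpha_\ell t^{\beta-B+\ell}e^{c^2/t+nt}$ to a modified Bessel function via $t=cu/\sqrt n$ and Schl\"afli's integral. Your identification of the order $\nu=B-\beta-\ell-1$, of the prefactor $e^{z}/\sqrt{2\pi z}$ with $z=2c\sqrt n$ producing the exponent $\tfrac14(2B-2\beta-3)$, and of the Gamma-quotients in $w_{s,r}$ (which depend only on $\nu^2$, so the sign of $\nu$ is immaterial) all check out against the stated constants.

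Two caveats. First, your minor-arc justification does not work as written: on the contour one has $\Re(1/t)=t_0/|t|^2$, which is \emph{not} bounded below by a positive constant (it tends to $0$ as $\Im(t)\to\pm\pi$), and even a constant lower bound would only make $e^{-K\Re(1/t)}$ a constant, which cannot beat the factor $\xi(|e^{-t}|)\,e^{nt_0}\asymp t_0^{\beta}e^{2c\sqrt n}$ supplied by \eqref{ximinor}. The genuine saving on the minor arc comes from $\Re(1/t)\le\sin^2(\delta)/\Re(t)$ there, i.e.\ one loses a fixed positive proportion of the exponent $c^2/t_0=c\sqrt n$; the hypothesis must be read (as in \cite{BM1,RhoadesNgo}) so as to deliver a factor of the shape $e^{-K\sqrt n}$, and you should argue this explicitly rather than appeal to a lower bound on $\Re(1/t)$. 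Second, what you give is a plan rather than a proof: the three error estimates you yourself flag as ``the main obstacle'' (truncating \eqref{LpolyMajor} and \eqref{xiMajor}, completing to the Hankel contour, truncating the Bessel expansion) constitute essentially all of the content of the proposition, and none of them is carried out. As a reconstruction of where the cited result comes from your sketch is accurate, but it would not stand alone as a proof.
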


Note that Proposition \ref{Wrightpoly} is originally due to Wright \cite{Wright}.

\begin{proposition}\label{Wrightlog}
Suppose hypotheses (1)--(4) are satisfied and that $L$ has logarithmic type near $1$ such that $B-\beta=\tfrac 12$, with $B$ and $\beta$ as in equations \eqref{LlogMajor} and \eqref{xiMajor} respectively. Then we have 
\[a(n)=-e^{2c\sqrt{n}}n^{-\frac 12}\frac{\alpha_0}{4\pi^\frac 12}\left(\log n-2\log c+O(n^{-\frac 12}\log n)\right)\]
as $n\rightarrow\infty$.
\end{proposition}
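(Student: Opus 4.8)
The plan is to run the same saddle-point circle-method argument that proves Proposition~\ref{Wrightpoly}, carrying the extra factor $\log t$ through every step. Starting from Cauchy's formula
\[a(n)=\frac{1}{2\pi i}\oint_{|q|=\rho}\frac{\xi(q)L(q)}{q^{n+1}}\,dq,\]
I would substitute $q=e^{-t}$ and take the radius $\rho=e^{-c/\sqrt n}$, turning the contour into the vertical segment $t=\tfrac{c}{\sqrt n}+iu$ with $u\in[-\pi,\pi]$ and giving
\[a(n)=\frac{1}{2\pi i}\int \xi(e^{-t})L(e^{-t})\,e^{nt}\,dt.\]
The exponent $\tfrac{c^2}{t}+nt$, coming from the main term of \eqref{xiMajor} together with $q^{-n}$, has a saddle point at $t=c/\sqrt n$, where it equals $2c\sqrt n$; this produces the universal factor $e^{2c\sqrt n}$.

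I would then split the contour into a major arc $|\arg t|<\tfrac\pi2-\delta$ around the saddle and the complementary minor arc. On the minor arc the estimates \eqref{Lminor} and \eqref{ximinor} apply, and the decay $e^{-K\Re(1/t)}$ in \eqref{ximinor} beats the growth of $\xi(|e^{-t}|)$ away from the positive real axis, so the minor-arc contribution is exponentially smaller than the main term and is absorbed into the error. This step is identical to the one in Proposition~\ref{Wrightpoly} and is insensitive to the type of $L$.

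On the major arc I would insert \eqref{xiMajor} and the logarithmic expansion \eqref{LlogMajor}. Using $B-\beta=\tfrac12$, the leading part of the integrand is
\[\xi(e^{-t})L(e^{-t})e^{nt}\sim \alpha_0\,t^{\beta-B}\log t\;e^{c^2/t+nt}=\alpha_0\,t^{-1/2}\log t\;e^{c^2/t+nt},\]
so after completing the truncated contour to a Hankel contour (at the cost of an exponentially small error) the main term is $\alpha_0$ times $\frac{1}{2\pi i}\int t^{-1/2}\log t\,e^{c^2/t+nt}\,dt$. The key device is that this integral arises from the pure-power integral
\[J(s):=\frac{1}{2\pi i}\int t^{-s}e^{c^2/t+nt}\,dt\]
—which is precisely what yields the Bessel asymptotics in Proposition~\ref{Wrightpoly}—by applying $-\partial_s$ and setting $s=B-\beta=\tfrac12$, since $-\partial_s t^{-s}=t^{-s}\log t$. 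The substitution $t=\tfrac{c}{\sqrt n}w$ gives $J(s)=(c/\sqrt n)^{1-s}I_{s-1}(2c\sqrt n)$, and because $I_\nu(2c\sqrt n)\sim e^{2c\sqrt n}/(2\sqrt\pi\,c^{1/2}n^{1/4})$ to leading order independently of $\nu$, the $s$-dependence of $J(s)$ is carried to leading order by the prefactor $(c/\sqrt n)^{1-s}$. Thus $-\partial_s J$ pulls down $\log(c/\sqrt n)=\log c-\tfrac12\log n$, and evaluating at $s=\tfrac12$ yields
\[a(n)\sim \alpha_0\,\frac{e^{2c\sqrt n}}{2\sqrt\pi\,n^{1/2}}\Bigl(\log c-\tfrac12\log n\Bigr)=-\frac{\alpha_0}{4\sqrt\pi}\,e^{2c\sqrt n}n^{-1/2}\bigl(\log n-2\log c\bigr),\]
which is exactly the asserted main term.

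It remains to control the error. The tail $O_\delta(t^k)$ in \eqref{LlogMajor} and the $O_\delta(e^{-\gamma/t})$ term in \eqref{xiMajor} contribute integrals of the same Hankel type but with higher powers of $t$; at the saddle each extra power of $t\sim n^{-1/2}$ costs a factor $n^{-1/2}$, while the differentiation attaches at most one logarithm, so the first correction has size $O(n^{-1/2}\log n)$ inside the bracket, matching the stated error. I expect the main obstacle to be exactly this bookkeeping: rigorously justifying that differentiating the Bessel asymptotic in the parameter $s$ (equivalently, evaluating the $\log t$ integral by a saddle-point expansion) is legitimate with uniform control in $n$, and verifying that the subleading terms, now each decorated with logarithms, genuinely collapse into a single clean $O(n^{-1/2}\log n)$ rather than something larger. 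Every other ingredient runs in parallel with the already-established polynomial case of Proposition~\ref{Wrightpoly}.
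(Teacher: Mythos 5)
The paper does not actually prove this proposition; it imports it verbatim from Proposition 1.10 of \cite{RhoadesNgo} (just as Proposition \ref{Wrightpoly} is imported from Wright), so there is no in-text argument to compare against. Your sketch reconstructs precisely the argument of that source: Cauchy's formula on the circle $|q|=e^{-c/\sqrt n}$, minor-arc suppression from hypotheses (3)--(4), Hankel-contour completion on the major arc, and reduction to Bessel-type integrals. Your main-term computation is correct and is consistent with Proposition \ref{Wrightpoly}: at $B-\beta=\tfrac12$ the polynomial case gives $p_0=\alpha_0 w_{0,0}=\alpha_0/(2\sqrt\pi)$ against $n^{-1/2}$, and your answer is exactly this multiplied by $\log t$ evaluated at the saddle $t=c/\sqrt n$, i.e. by $\log c-\tfrac12\log n$, which reproduces $-\frac{\alpha_0}{4\sqrt\pi}(\log n-2\log c)$. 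The one step you flag --- differentiating $J(s)=(c/\sqrt n)^{1-s}I_{s-1}(2c\sqrt n)$ in $s$ with uniform control --- is indeed the only point needing care, and it closes routinely: differentiate under the integral sign in the Hankel representation of $I_\nu$ and observe that the resulting factor $\log w$ vanishes at the saddle $w=1$, which is what buys the extra factor $n^{-1/2}$; equivalently (and this is how the cited sources phrase it) split $\log t=\log(c/\sqrt n)+\log(t\sqrt n/c)$, so the first piece multiplies the already-established polynomial-case integral and the second piece vanishes at the saddle. Your error bookkeeping (the $O_\delta(t)$ tail of \eqref{LlogMajor} contributing $O(n^{-1/2}\log n)$ inside the bracket, all other contributions exponentially small or $O(n^{-1/2})$) is also right. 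In short: the proposal is correct and follows the same route as the proof the paper cites; the gap you identify is real but standard.
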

\subsection{A Preliminary Lemma}
Here we prove a preliminary result that will be the key step towards the proof of Theorem \ref{thm:main}. For the rest of the paper, let
\[f(t):=\frac{1}{e^t-1},\quad \Re(t)>0\]
\begin{lemma}\label{thm:lem1}
Let $r,N$ be greater than zero. Then we have for $|\arg(t)|< \frac\pi2-\delta$ for some $0<\delta<\tfrac\pi2$: 
$$
\sum_{m = 0}^{\infty} f\left(\left(m+ \frac{r}{N} \right) t\right) \sim - \frac{ \log (t)+\psi\left(\frac rN\right)}{t} + O(\log t),\quad (t\to 0),
$$
where $\psi(z)=\frac{\Gamma'(z)}{\Gamma(z)}$ denotes Euler's digamma function.
\end{lemma}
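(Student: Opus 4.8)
The plan is to deduce the statement from Zagier's Euler--Maclaurin formula (Proposition \ref{propZagier}). That proposition cannot be applied to $f$ directly, because $f$ is \emph{singular} at the origin: from $\frac{t}{e^t-1}=\sum_{n\geq0}\frac{B_n}{n!}t^n$ one reads off $f(t)\sim \frac1t-\frac12+\frac{t}{12}-\cdots$, so $f$ has no expansion of the form $\sum_{n\geq0}b_nt^n$ demanded there. I would therefore strip off the pole by writing
\[
f(t)=\frac{e^{-t}}{t}+f_0(t),\qquad f_0(t):=\frac{1}{e^t-1}-\frac{e^{-t}}{t}.
\]
Since $\frac{e^{-t}}{t}=\frac1t-1+\frac t2-\cdots$ carries exactly the same simple pole as $f$, the remainder $f_0$ extends to a $C^\infty$ function on $[0,\infty)$ with a genuine Taylor-type expansion $f_0(t)\sim\sum_{n\geq0}b_nt^n$; moreover along any ray $t=xe^{i\theta}$ with $|\theta|<\frac\pi2-\delta$ both $f_0$ and all its derivatives decay like $e^{-x\cos\theta}$. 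Hence $f_0$ satisfies the hypotheses of Proposition \ref{propZagier} in the complex form noted in the remark following it, and the contribution of $\frac{e^{-t}}{t}$ will be handled separately.

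For the regular piece, Proposition \ref{propZagier} yields
\[
\sum_{m=0}^{\infty} f_0\!\left(\left(m+\tfrac rN\right)t\right)\sim\frac1t\int_0^\infty f_0(x)\,dx-\sum_{n\geq0}b_n\frac{B_{n+1}(r/N)}{n+1}t^n .
\]
The sum on the right is an ordinary power series in $t$, hence $O(1)$ as $t\to0$, so the whole singular part of this piece is $\frac1t\int_0^\infty f_0$. The crucial evaluation is $\int_0^\infty f_0(x)\,dx=\gamma_E$: writing $\frac{1}{e^x-1}=\frac{e^{-x}}{1-e^{-x}}$ exhibits this integral as the negative of the $a=1$ case of Gauss's integral for the digamma function, and $\psi(1)=-\gamma_E$. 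Thus $\sum_{m}f_0((m+\tfrac rN)t)=\frac{\gamma_E}{t}+O(1)$.

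It remains to analyze $\Sigma(t):=\sum_{m=0}^{\infty}\frac{e^{-(m+r/N)t}}{(m+r/N)t}=\frac1tP(t)$, where $P(t):=\sum_{m\geq0}\frac{e^{-(m+r/N)t}}{m+r/N}$. Since $P'(t)=-\frac{e^{-(r/N)t}}{1-e^{-t}}$ and $P(t)\to0$ as $\Re(t)\to\infty$, I would represent $P(t)=\int_t^\infty\frac{e^{-(r/N)s}}{1-e^{-s}}\,ds$ along the ray through $t$ and split off the exponential integral,
\[
P(t)=\int_t^\infty\frac{e^{-s}}{s}\,ds+\int_t^\infty\left(\frac{e^{-(r/N)s}}{1-e^{-s}}-\frac{e^{-s}}{s}\right)ds .
\]
The first integral is $-\gamma_E-\log t+O(t)$ as $t\to0$ (the expansion of the exponential integral, see \cite{NIST}); the integrand of the second is bounded near $0$ and rapidly decaying, so by holomorphy and decay its contour may be deformed onto the positive real axis, giving $\int_0^\infty(\cdots)\,ds+O(t)=-\psi(r/N)+O(t)$ by the Gauss representation $\psi(a)=\int_0^\infty\big(\frac{e^{-s}}{s}-\frac{e^{-as}}{1-e^{-s}}\big)\,ds$. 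Hence $P(t)=-\log t-\gamma_E-\psi(r/N)+O(t)$ and $\Sigma(t)=\frac{-\log t-\gamma_E-\psi(r/N)}{t}+O(1)$. Adding the two contributions, the terms $\pm\frac{\gamma_E}{t}$ cancel and I obtain $\sum_{m\geq0}f((m+\tfrac rN)t)=-\frac{\log t+\psi(r/N)}{t}+O(1)$, which gives the claim (with the weaker error $O(\log t)$ stated).

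The main obstacle is not the algebra but the justification of the two closed-form constants and the contour manipulations \emph{uniformly} in the cone $|\arg t|<\frac\pi2-\delta$: one must verify the ray representation of $P$, check that deforming its contour to the real axis crosses no poles of $1/(1-e^{-s})$ (these sit at $s\in 2\pi i\Z\setminus\{0\}$ on the imaginary axis, hence outside the cone $|\arg s|<\frac\pi2$), and confirm that the exponential-integral expansion persists off the real axis. A cleaner, though less self-contained, route would avoid the splitting via the Mellin transform: from $\int_0^\infty t^{s-1}f(t)\,dt=\Gamma(s)\zeta(s)$ one gets $\sum_m f((m+\tfrac rN)t)=\frac1{2\pi i}\int_{(c)}\Gamma(s)\zeta(s)\zeta(s,\tfrac rN)\,t^{-s}\,ds$ for $c>1$; shifting the contour left, the double pole at $s=1$ produced by the simple poles of $\zeta(s)$ and the Hurwitz zeta $\zeta(s,\tfrac rN)$ contributes exactly $-\frac{\log t+\psi(r/N)}{t}$ once one uses $\zeta(s,a)=\frac1{s-1}-\psi(a)+O(s-1)$, while the pole at $s=0$ and the shifted integral together give $O(1)$.
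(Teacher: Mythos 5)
Your argument is correct, and its skeleton coincides with the paper's: both proofs strip off the pole by writing $f^*(t)=f(t)-t^{-1}e^{-t}$, apply Proposition \ref{propZagier} to the smooth remainder, and evaluate $\int_0^\infty f^*(x)\,dx=\gamma_E$ via the $a=1$ case of Gauss's integral for $\psi$ (eq. 5.9.18 in \cite{NIST}). Where you genuinely diverge is in the singular sum $\sum_{m\geq0}\frac{e^{-(m+r/N)t}}{(m+r/N)t}$. The paper stays with series manipulations: it writes $\frac{1}{m+r/N}=\frac1m-\frac{r/N}{m(m+r/N)}$ for $m\geq1$, reduces the first piece to $-\log(1-e^{-t})$ (which supplies the $-\log t$), and extracts $\psi\left(\tfrac rN\right)$ from the closed form of the convergent series $\sum_{m\geq1}\frac{1}{m(m+r/N)}$ (eq. 5.7.6 in \cite{NIST}), with an $O(t\log t)$ tail estimate via l'H\^opital's rule that is the source of the stated $O(\log t)$ error. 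You instead integrate $P'(t)=-e^{-(r/N)t}/(1-e^{-t})$ along the ray through $t$, split off the exponential integral $E_1(t)=-\gamma_E-\log t+O(t)$, and recognize the remaining integral as Gauss's representation of $-\psi\left(\tfrac rN\right)$ after deforming the contour to the positive real axis --- legitimate, as you note, because the poles of $1/(1-e^{-s})$ sit at $2\pi i\Z\setminus\{0\}$ outside the cone and the integrand decays there. Your route packages the $\log t$ and the digamma constant into a single integral representation and yields the slightly sharper error $O(1)$; the paper's route avoids any contour deformation at the cost of a partial-fractions rearrangement and a separate limit argument for the tail. Both correctly exhibit the cancellation of the $\pm\gamma_E/t$ terms between the two pieces, and both rest on the same complex-cone extension of Proposition \ref{propZagier} noted in the remark following it.
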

\begin{proof}
The proof follows the procedure described in the remarks following the proof of \cite[Proposition 3]{Zagier}.

From the definition of the Bernoulli numbers we have 
$$ 
f(t) = \sum_{k = 0}^{\infty} \frac{ B_k}{k!} t^{k-1}.
$$

Let $f^*(t) := f(t) - t^{-1} e^{-t}$. Then we have for $\Re(t)>0$,
\begin{equation}\label{equation:eq1}
\sum_{m = 0}^{\infty} f\left(\left(m+ \frac{r}{N} \right) t\right) = \sum_{m = 0}^{\infty} \frac{1}{ \left( m + \frac{r}{N} \right)t} e^{-\left(m + \frac{r}{N}\right)t} + \sum_{m = 0}^{\infty} f^* \left( \left(m + \frac{r}{N}\right)t\right). 
\end{equation}
To find the asymptotic expansion of the second sum in the right hand side of \ref{equation:eq1}, we note that
$$f^* (t) \sim \sum_{k=0}^{\infty} b_k t^k,\quad (t\to 0),$$
where
$$
b_k := (B_{k+1}  -1) \frac{1}{(k + 1)!}.
$$

Then it follows from Proposition \ref{propZagier} that we have
$$
\sum_{m = 0}^{\infty} f^* \left( \left(m + \frac{r}{N} \right)t\right) \sim \frac{ \int_{0}^{\infty} f^*(t) dt}{t} - \sum_{n = 0}^{\infty} b_n \frac{B_{n+1} \left(\frac{r}{N}\right)}{n+1} t^n , \quad (t\to 0).
$$

Next we consider the first sum of the right hand side of \ref{equation:eq1}. 

First, since
 $$ \frac{1}{m+ \frac{r}{N}} = \left( \frac{1}{m+ \frac{r}{N}} - \frac{1}{m} \right) + \frac{1}{m},$$
we have the following:
$$
\sum_{m = 0}^{\infty} \frac{1}{  m + \frac{r}{N} } e^{-mt}= \frac{N}{r}+  \sum_{m=1}^{\infty} \frac{e^{-mt}}{m} - \sum_{m=1}^{\infty} \frac{ \frac{r}{N}}{m (m + \frac{r}{N})} e^{-mt}.
$$

The first sum is equal to $ - \log ( 1- e^{-t}) $. Differentiating once with respect to $t$, we see that the following holds:
$$
- \log\left( 1 - e^{-t}\right) \sim \log\left( \frac{1}{t}\right) -  \sum_{n = 1}^{\infty} \frac{B_n}{n \cdot n!} t^n ,\quad (t\to 0). 
 $$
 
The second sum is absolutely and uniformly convergent for $\Re(t) \ge 0$ and we have 
\[\lim\limits_{t\rightarrow 0}\sum_{m=1}^{\infty} \frac{ 1}{m (m + \frac{r}{N})} e^{-mt}= \sum_{m=1}^{\infty} \frac{ 1}{m (m + \frac{r}{N})}=\frac Nr\left(\gamma_E+\psi\left(\frac rN\right)\right)+\frac{N^2}{r^2}\]
by equation 5.7.6 in \cite{NIST}\nocite{Olver}, where $\gamma_E$ denotes the Euler-Mascheroni constant. We have that in fact
\[\sum_{m=1}^{\infty} \frac{ 1}{m (m + \frac{r}{N})} e^{-mt}=\frac Nr\left(\gamma_E+\psi\left(\frac rN\right)\right)+\frac{N^2}{r^2}+O(t\log t), \quad (t\rightarrow 0),\]
or, equivalently, that
\[\lim\limits_{t\rightarrow 0} \sum_{m=1}^{\infty} \frac{ 1}{m (m + \frac{r}{N})} \frac{e^{-mt}-1}{t\log t}\]
exists, which can easily be seen by applying de l'H\^opital's Rule (since the series is still absolutely and locally uniformly convergent for $\Re(t)>0$, we can differentiate each summand).

Assembling all of this gives the following:

\begin{align*}
&\sum_{m = 0}^{\infty} f\left(\left(m+ \frac{r}{N} \right) t\right)\\
 \sim& \frac{e^{- \frac{r}{N} t}}{t} \left( \frac{N}{r} + \log\left( \frac{1}{t} \right) -  \sum_{n = 1}^{\infty} \frac{B_n}{n \cdot n!} t^n + \sum_{m=1}^{\infty} \frac{ - \frac{r}{N}}{m (m + \frac{r}{N})} e^{-mt} \right)  \\ 
&\qquad\qquad\qquad\qquad\qquad+ \frac{ \int_{0}^{\infty} f^*(t) dt}{t} - \sum_{n = 0}^{\infty} b_n \frac{B_{n+1} (\frac{r}{N})}{n+1} t^n ,\quad (t\to 0).
\end{align*}

Simplifying, we have
$$
\sum_{m = 0}^{\infty} f\left(\left(m+ \frac{r}{N} \right) t\right) \sim - \frac{ \log (t)+\psi\left(\frac rN\right)}{t} + O(\log t) , \quad (t\to 0).
$$
Here we used the fact (see eq. 5.9.18 in \cite{NIST}) that 
\[\int_0^\infty f^*(t)dt=\int_0^\infty \frac{1}{e^t-1} -  \frac{e^{-t}}{t}dt=\gamma_E.\]
\end{proof}

\section{Proof of Theorem \ref{thm:main}}\label{secProof}
Now we prove the Theorem \ref{thm:main}. 
\begin{proof}
By Lemma 2.1 in \cite{BeMe} we have that
\begin{equation*}
\sum_{n=1}^{\infty} \widehat{T}_{r,N} (n) q^n = \left( \prod_{n \ge 1} \frac{1}{1 - q^n} \right) \left( \sum_{n = 1}^{\infty} \left( \sum_{\substack{d |n \\ d \equiv r \pmod{N}}} q^n \right) \right) 
\end{equation*}
Letting $f$ be as defined in the previous section and setting $q := e^{-t}$, we simplify this as follows:

\begin{align*} 
\sum_{n=1}^{\infty} \widehat{T}_{r,N} (n) q^n  &=  \left( \prod_{n \ge 1} \frac{1}{1 - q^n} \right) \sum_{m \equiv r \pmod{N}} \frac{q^m}{1 - q^m} \\
&=  \left( \prod_{n \ge 1} \frac{1}{1 - q^n} \right) \sum_{m=0}^{\infty} f\left( \left(m + \frac{r}{N}\right) Nt\right).
\end{align*}

Now we wish to apply the method outlined in Section \ref{secWright} with $\xi (e^{-t}) := \frac{(2 \pi)^{\frac{1}{2}}}{ \eta ( \frac{it}{2 \pi})}$ and $L(e^{-t}) := (2\pi)^{-\frac 12}q^{\frac{1}{24}} \sum_{m=0}^{\infty} f\left( \left(m + \frac{r}{N}\right) Nt\right)$. The function $\xi (q)$ is well-known to satisfies hypotheses \ref{hypo2} and \ref{hypo4} in Section \ref{secWright} with $c^2 = \frac{\pi^2}{6}$, $\beta = \frac{1}{2}$, and  $\gamma = 4 \pi^2$ (see e.g. \cite[Section 1.4, p. 280]{HR} or \cite[pp. 112--113]{Wright}). 

From Lemma \ref{thm:lem1} we see that $L(q)$ satisfies hypothesis \ref{hypo1}. By the straightforward estimate 
\[\left|\sum_{m\equiv r\pmod N} \frac{q^m}{1 - q^m}\right|\leq \sum_{m=1}^\infty \frac{|q|^m}{1 - |q|^m}\]
and by a well-known and essentially straight-forward estimate, carried out for example in the proof of Corollary 4.5 in \cite{RhoadesNgo}, we see that
\[\sum_{m\equiv r\pmod N} \frac{q^m}{1 - q^m}\ll_\delta t^{-\frac 32}\]
in the bounded cone $\frac\pi2-\delta\leq |\arg(t)|<\frac\pi2$ and $|\Im(t)|\leq \pi$, so that $L(q)$ also satisfies hypothesis \ref{hypo3} so that we can apply Propositions \ref{Wrightpoly} and \ref{Wrightlog}.

To be more precise, the asymptotic expansion as $t \to 0$ has both a polynomial and logarithmic asymptotic component. The logarithmic component is as follows:
$$
L_1 (e^{-t} ) = -\frac{\log (t)}{(2 \pi)^{\frac{1}{2}} N t} (1 + O(1)),
$$
to which we can apply Proposition \ref{Wrightlog}, with $B=1$ and $\alpha_0 = - \frac{1}{(2 \pi)^{\frac{1}{2}} N}$.

For the polynomial part, we have the following:
$$
L_2 (e^{-t}) =  - \frac{\log N}{Nt (2 \pi)^{\frac{1}{2}}} - \frac{\psi\left(\frac rN\right)}{Nt (2 \pi)^{\frac{1}{2}}}.
$$
For $L_2$, we apply Proposition \ref{Wrightpoly}, with $B=1$, $M=1$, $\alpha_0 = -\frac{\left(\psi\left(\frac{N}{r}\right)+\log N\right)}{N (2 \pi)^{\frac{1}{2}}} $. 
Putting these results together completes the proof.
\end{proof}

\end{document}